\newtheorem{theorem}{Theorem}[section]
\newtheorem{corollary}[theorem]{Corollary}
\theoremstyle{definition}
\newtheorem{definition}[theorem]{Definition}
\newtheorem{example}[theorem]{Example}
\newtheorem{exercise}[theorem]{Exercise}
\newtheorem{conjecture}[theorem]{Conjecture}
\theoremstyle{remark}
\newtheorem{remark}[theorem]{Remark}
\numberwithin{equation}{section}
\begin{document}
\setcounter{page}{1}

\title[Langlands program in brief]{Langlands program in brief}

\author[Nikolaev]
{Igor ~V. ~Nikolaev$^1$}

\address{$^{1}$ Department of Mathematics and Computer Science, St.~John's University, 8000 Utopia Parkway,  
New York,  NY 11439, United States.}
\email{\textcolor[rgb]{0.00,0.00,0.84}{igor.v.nikolaev@gmail.com}}


\subjclass[2010]{Primary 11F70.}

\keywords{elliptic curves, modular forms.}


\begin{abstract}
This is  a  credit  mini-course   prepared for a Summer School at the 
University of Sherbrooke.   The course consists of three one-and-half hour lectures
and three credit exercises for a class of advanced graduate students.          

\end{abstract} \maketitle

\tableofcontents

\section{Langlands Program}
\subsection{Fermat's Grand Theorem}
Problem: How to find solutions to the equation
\begin{equation}\label{eq1}
X^n+Y^n=Z^n,
\end{equation}
where $n\in \mathbf{Z}$ and $n\ge 2$?

If  $X, Y, Z\in \mathbf{C}$ 
are complex numbers, it is obvious: 
\begin{equation}
(X,~Y,   ~^n\sqrt{X^n+Y^n}).  
\end{equation}

Suppose we are looking for a solution in the field of rational numbers $\mathbf{Q}$.  So, this is not trivial. 
(The root of an integer is not always an integer!) 
\begin{theorem}  {\bf (Grand Theorem of Pierre de Fermat,    1601 -- 1665)}
If  $n \ge 3$ there are no solutions $(X, Y, Z)$ of  (\ref{eq1}) in integer numbers except  for the trivial one $XYZ=0$.
\end{theorem}
\begin{proof}
(In short:   {\bf  G.~Frey, R.~Taylor, K.~Ribet  and  A.~Wiles}.)
Let us consider the  {\it elliptic curve}  $E$  (will be explained) given by homogeneous equation: 
\begin{equation}
E:   ~Y^2Z = X (X - a^pZ) (X +b^pZ) 
\end{equation}
such that:
\begin{equation}
a^p - b^p = c^p,
\end{equation}
where $a, b, c \in \mathbf{Z}$  and $p$ is a prime number.  
Then ({\bf G. Frey}) $E$ is not a {\it modular curve} (will be explained).
So, 
\begin{equation}
a^p - b^p \neq c^p
\end{equation}
and Fermat's Grand Theorem is proven! 
\end{proof}

\subsection{Analysis: Modular group, modular forms, etc.}
Modular forms are a generalization of periodic functions. 
\begin{example}
Periodic function: 

\medskip
(i)  $\sin (x + 2\pi) = \sin ~x, ~\forall x \in \mathbf{C}$

\smallskip
(ii)  $e^{(2\pi + x)i} = e^{ix}, ~\forall x \in \mathbf{R}$.
\end{example}
\begin{definition}
Modular group:
\begin{equation}
SL_2(\mathbf{Z})=\left\{\left(
\begin{matrix}
a &b\cr c & d
\end{matrix}
\right) \quad | \quad a,b,c,d\in \mathbf{Z},  \quad ad-bc=1\right\}.
\end{equation}
\end{definition}
\begin{exercise} 
Check that  $SL_2(\mathbf{Z})$ is a multiplicative group;   find the unit and inverse elements!
\end{exercise}
\begin{definition}
Congruence group: 
\begin{equation}
\Gamma_0(N)=\left\{\left(
\begin{matrix}
a &b\cr c & d
\end{matrix}
\right)\in  SL_2(\mathbf{Z})   \quad | \quad c\equiv 0 ~mod~N\right\},
\end{equation}
where $N \ge 1$ is an integer. 
\end{definition}
\begin{exercise} 
Prove that $\Gamma_0(N)$ is a multiplicative group! 
\end{exercise}

\bigskip\noindent
If  ${\Bbb H}:=\{z\in\mathbf{C}~|~\Im ~(z)>0\}$ 
is a hyperbolic half-plane, then $SL_2(\mathbf{Z})$ acts on  ${\Bbb H}$ by the formula:
\begin{equation}
z\mapsto {az+b\over cz+d}, \quad  \left(
\begin{matrix}
a &b\cr c & d
\end{matrix}
\right)\in  SL_2(\mathbf{Z}).   
\end{equation}
The subgroup  $\Gamma_0(N)\subset SL_2(\mathbf{Z})$ acts on  ${\Bbb H}$ and the space
$X_0(N):= {\Bbb H}/\Gamma_0(N)$  is a  {\it Riemann surface } of genus  $g \ge 0$.  
\begin{example}
Each  $N$ defines the genus $g$ of $X_0(N)$;  for instance:

\medskip
(i) if $N = 2$,  then  $X_0(2)$ is a sphere, i.e. $g = 0$;

\smallskip
(ii) if  $N = 11$,  then  $X_0(11)$  is a torus, i.e. $g = 1$.  
\end{example}

\begin{definition}
An automorphic function on ${\Bbb H}$ with respect to the group $\Gamma_0(N)$ 
is an analytical function $f: {\Bbb H}\to \mathbf{C}$,  
such that: 
\begin{equation}
f\left({az+b\over cz+d}\right)=f(z), \quad \forall z\in {\Bbb H}, \quad \forall  \left(
\begin{matrix}
a &b\cr c & d
\end{matrix}
\right)\in \Gamma_0(N). 
\end{equation}
A {\it modular form} of weight equal to $2$  is a function  $f: {\Bbb H}\to \mathbf{C}$,
 such that: 
\begin{equation}
f\left({az+b\over cz+d}\right)=(cz+d)^2 f(z), \quad \forall z\in {\Bbb H},  \quad \forall  \left(
\begin{matrix}
a &b\cr c & d
\end{matrix}
\right)\in \Gamma_0(N). 
\end{equation}
\end{definition}

\subsection{Arithmetic: Elliptic curves, rational points, etc.}
\begin{definition}
An elliptic curve is a cubic of the form: 
\begin{equation}\label{eq1.11}
y^2 = x (x - 1) (x -\lambda), \quad \lambda\in \mathbf{C}.
\end{equation}
\end{definition}
\begin{example}
If $\lambda \in \mathbf{R}$, then we can draw a graph of the elliptic curve, as shown in Figure 1. 
 \end{example}
\begin{figure}
\begin{picture}(300,100)(0,0)

\put(100,50){\line(1,0){60}}
\put(130,20){\line(0,1){60}}

\thicklines
\qbezier(165,25)(115,50)(165,75)
\qbezier(110,50)(117,15)(130,50)
\qbezier(110,50)(117,85)(130,50)

\end{picture}
\caption{Affine cubic $y^2=x(x-1)(x+1)$.}
\end{figure}

\begin{example} {\bf (Weierstrass)}
If  $\lambda \in \mathbf{C}$, then the elliptic curve is a Riemann surface of the genus $g = 1$, i.e. 
a compex torus. 
\end{example}
\begin{definition}
If  $\lambda\in\mathbf{Q}$ then the elliptic curve  (\ref{eq1.11}) is called rational. 
Notation:  $E(\mathbf{Q})$. 
\end{definition}

\subsection{Conjecture of Shimura-Taniyama}
\begin{theorem}  {\bf (Wiles, conjectured by Shimura-Taniyama)}
For each rational elliptic curve $E(\mathbf{Q})$, there exists an integer $N> 1$,
 such that there is a holomorphic map between two Riemann surfaces: 
\begin{equation}\label{eq1.12} 
X_0(N)\to E(\mathbf{Q}).
\end{equation}
\end{theorem}
\begin{definition}
If $E(\mathbf{Q})$ satisfies condition  (\ref{eq1.12}) then $E(\mathbf{Q})$ is called modular. 
\end{definition}
\begin{corollary}\label{cr1}
{\bf (A.~Wiles)}
If an elliptic curve is rational, then such a curve is modular. 
 \end{corollary}
How can we prove Fermat's Grand Theorem using Corollary  \ref{cr1}?
\begin{theorem}  {\bf (Fermat's Grand Theorem)}
If  $n \ge 3$ there are no solutions $(X, Y, Z)$ of  (\ref{eq1}) in integer numbers except  for the trivial one $XYZ=0$.
\end{theorem}
\begin{proof}
Suppose $a^p - b^p = c^p$ is a non-trivial solution for a prime number $p> 2$. 
Let us consider the rational elliptic curve $E(\mathbf{Q})$ of form: 
\begin{equation}
Y^2Z = X (X - a^pZ) (X + b^pZ). 
\end{equation}
A reduction modulo $p$ argument leads to the conclusion that $E(\mathbf{Q})$ is never modular. 
This is a contradiction with Corollary \ref{cr1} ! 
Therefore $a^p - b^p \ne c^p$ 
\end{proof}

\subsection{What is the Langlands Program?} 
Robert Langlands, born in British Columbia, professor at Princeton University (Institute of Advanced Studies). 
\begin{remark}
Hyperbolic half-plane  ${\Bbb H}$ is a homogeneous space of the Lie group  $SL_2(\mathbf{R})$.
\end{remark}
\begin{exercise} 
Prove that 
\begin{equation}
{\Bbb H} \cong SL_2(\mathbf{R}) /  SO_2(\mathbf{R}),
\end{equation}
i.e. ${\Bbb H}$ is a homogeneous space. 
(Hint: Check that each  $z \in {\Bbb H}$ is a fixed point of  $SO_2(\mathbf{R})$ 
and choose $z = i$.)
\end{exercise}

So, we have a holomorphic map: 
\begin{equation}\label{eq1.15}
\underbrace{\Gamma_0(N)\backslash SL_2(\mathbf{R}) /  SO_2(\mathbf{R})}_{Lie  ~groups}
\longrightarrow 
\underbrace{E(\mathbf{Q})}_{arithmetic}.  
\end{equation}

\bigskip
\centerline{\underline{Langlands program in brief}.}

\medskip
Motivated by  (\ref{eq1.15}),  find a relation  between:

\bigskip   
\displaymath
\fbox{\begin{minipage}{8em}
Representation Theory of the Lie groups
(like $SL_2(\mathbf{R})$)
\end{minipage}}
\qquad
\leftrightarrow
\qquad
\fbox{\begin{minipage}{10em}
Number Theory  and  Arithmetic
(like $E(\mathbf{Q})$ et Fermat's Grand Theorem)
\end{minipage}}
\enddisplaymath

\bigskip
\centerline{\underline{Why}?}

\bigskip
For example, to prove the Grand Theorem of Fermat! (among other things).

\bigskip 
In lectures 2 and 3 we will detail the Langlands Program.


\section{Analysis of Modular Forms}
\subsection{Automorphic functions}
Automorphic functions are a generalization of periodic functions. 
\begin{example}
Periodic function: 

\medskip
(i)  $\sin (x + 2\pi) = \sin ~x, ~\forall x \in \mathbf{C}$

\smallskip
(ii)  $e^{(2\pi + x)i} = e^{ix}, ~\forall x \in \mathbf{R}$.
\end{example}
\begin{definition}
 An automorphic function on  ${\Bbb H}$ with respect to the group $SL_2(\mathbf{Z})$ is an analytical
  function  $f: {\Bbb H}\to \mathbf{C}$, such that: 
\begin{equation}
f\left({az+b\over cz+d}\right)=f(z), ~\forall z\in {\Bbb H}, ~\forall  \left(
\begin{matrix}
a &b\cr c & d
\end{matrix}
\right)\in SL_2(\mathbf{Z}). 
\end{equation}
\end{definition}

\subsection{Modular forms of weight  $2k$}
\begin{definition}
 A modular form of weight $2k$ with respect to the group  $\Gamma_0(N)$ is a holomorphic function 
 $f: {\Bbb H}\to \mathbf{C}$
such that: 
\begin{equation}
f\left({az+b\over cz+d}\right)=(cz+d)^{2k} f(z), ~\forall z\in {\Bbb H}, ~\forall  \left(
\begin{matrix}
a &b\cr c & d
\end{matrix}
\right)\in \Gamma_0(N). 
\end{equation}
\end{definition}
\begin{remark}\label{rmk2.4}
Each modular form of weight $2k$  corresponds to a holomorphic $k$-form on the Riemann 
surface $X_0(N)$. In particular, if $k = 1$ we have a bijection between the modular forms of weight $2$ 
and holomorphic differentials on $X_0(N)$. 
\end{remark}

\subsection{Cusp points  of  the group $\Gamma_0(N)$}
\begin{definition}
Point $x\in\partial {\Bbb H}$ (an ``absolute''  of the half-plane ${\Bbb H}$) 
is called a cusp point  of the group  $\Gamma_0(N)$ if there exists  $\alpha\in\Gamma_0(N)$
such that $\alpha(x)=x$.  
\end{definition}
\begin{example} 
$${ax+b\over cx+d}=x\quad \Longleftrightarrow\quad cx^2+(d-a)x-b=0,  $$
$$x_{1,2}={a-d\pm \sqrt{(d-a)^2+4bc}\over 2c}= {a-d\pm \sqrt{(a+d)^2-4(ad-bc)}\over 2c}=$$
$$=  {a-d\pm \sqrt{(a+d)^2-4}\over 2c}.$$
But  $x_1=x_2=x$,  therefore $|a+d|=2$ and $x={a-d\over 2c}\in\partial {\Bbb H}$ 
is a unique fixed point of the transformation:
$$
 \alpha=\left(
 \begin{matrix}
 a & b \cr
 c & -a\pm 2
 \end{matrix}
 \right)
 \in \Gamma_0(N). 
 $$  
 In particular,   cusp points of the group  $\Gamma_0(N)$ are 
 the  rational points  at the absolute $\partial {\Bbb H}$. 
 \end{example}

\bigskip
\centerline{\underline{Why cusp points are important}?}

\bigskip\noindent
Consider the Riemann surface:
\begin{equation}
X_0(N)={\Bbb H}/\Gamma_0(N).
\end{equation}
The orbits of  group $\Gamma_0(N)$ have a “deficiency” in the cusp point $x$, so the Riemann surface $X_0(N)$   
has a “cusp” in $x$!

\subsection{Cusp modular forms}
Suppose that $f(z)$ is a modular form of weight $2$ for the group
$\Gamma_0(N)$, i.e.
\begin{equation}
f\left({az+b\over cz+d}\right)=(cz+d)^{2} f(z), ~\forall z\in {\Bbb H}, ~\forall  \left(
\begin{matrix}
a &b\cr c & d
\end{matrix}
\right)\in \Gamma_0(N). 
\end{equation}
\begin{definition}
If $f(x)=0$ at each parabolic point $x\in \partial {\Bbb H}$ of group $\Gamma_0(N)$, then $f(z)$ is called a 
cusp modular form. The space of all parabolic forms is denoted by $S_2(\Gamma_0(N))$. 
(In German “Spitzform” means cusp form). 
\end{definition}

\bigskip
\centerline{\underline{Why are cusp modular forms important}?}

\bigskip\noindent
  According to Remark \ref{rmk2.4}, each  $f(z)\in S_2(\Gamma_0(N))$ corresponds to 
  the holomorphic differential  $\omega=f(z)dz$ of the Riemann surface $X_0(N)$. 
  So the zeros of $\omega$ coincide with the “cusps” of  $X_0(N)$. 
  (There is always a finite number of such “cusps”.) 
  Also some forms  $f(z)\in S_2(\Gamma_0(N))$ have a very interesting Fourier series,
   see the next paragraph!

\subsection{Fourier series of cusp modular forms}
If  $f(z)\in S_2(\Gamma_0(N))$, then $f(x) = 0$  if and only if $x\in\partial {\Bbb H}$ is a cusp point of $\Gamma_0(N)$. 
We will introduce a variable
\begin{equation}
q=e^{2\pi i z}. 
\end{equation}
\begin{definition}
 Fourier series of a cusp modular  form  $f(z)\in S_2(\Gamma_0(N))$ is a series
\begin{equation}
f(z)=\sum_{n=-\infty}^{n=\infty} c_n q^n.
\end{equation}
\end{definition}
\begin{remark}
The coefficients  $c_n$ of 
certain cusp modular forms $f(z)\in S_2(\Gamma_0(N))$.
encode important arithmetic information, see below.
\end{remark}
\begin{example} {\bf ($j$-invariant of  F.~Klein)}
Each elliptic curve has a $j$-invariant that is constant on the isomorphism class of an elliptic curve. 
The $j$-invariant is a cusp modular  form having  the  Fourier series: 
\begin{equation}
j(z)={1\over q}+744+196884 q + 21493760 q^2+\dots
\end{equation}
 Small miracle: the coefficients $c_n$ of  $j(z )$  are connected to the order 
 of finite simple groups (called “sporadic”)! 
 \end{example}

\subsection{$L$-series of cusp modular forms}
\begin{definition}
If $f(z)\in S_2(\Gamma_0(N))$ et  $f(z)=\sum_{n=-\infty}^{n=\infty} c_n q^n$, then
 the convergent series
\begin{equation}
L(s,f):=\sum_{n=1}^{\infty} {c_n\over n^s}
\end{equation}
is called an $L$-series of the cusp modular form. 
\end{definition}
\begin{remark}
The series  $L(s,f)$ is a generalization of the Riemann zeta function: 
\begin{equation}
\zeta(s)=\sum_{n=1}^{\infty} {1\over n^s}. 
\end{equation}
\end{remark}

\section{Arithmetic of rational elliptic curves }
\subsection{Rational elliptic curves}
\begin{definition}
A rational elliptic curve is a cubic of  the form:
\begin{equation}
E(\mathbf{Q}):=\{X,Y,Z\in \mathbf{C}P^2 ~|~Y^2Z=X(X-Z)(X-\lambda Z),\quad \lambda\in\mathbf{Q}\}.
\end{equation}
Rational points of $E(\mathbf{Q})$ correspond to the integer  triples $(X, Y, Z)$.
\end{definition}

\subsection{Finite fields}
\begin{exercise} 
Give at least 4 different examples of fields of characteristic  zero. Reminder: 
The characteristic of a field $F$ is the minimal number $n$ such that: 
\begin{equation}
nx:=\underbrace{x+x+\dots+x}_{n ~ times}=0
\end{equation}
for every $x\in F$.  If $n$ does not exist, then $char~(F):=0$. 
\end{exercise}
\begin{example} 
If  $p\ge 2$ is a prime number, then the  field  ${\Bbb F}_p$  with a finite number elments
has the characteristic  $char~({\Bbb F}_p)=p$. Proof:
\begin{equation}
px:=\underbrace{x+x+\dots+x}_{p ~ times}=0 ~mod~ p, \qquad\forall x\in {\Bbb F}_p. 
\end{equation}
 \end{example}

\subsection{Reduction  of  $E(\mathbf{Q})$ modulo $p$}
\begin{remark}
If $(X, Y, Z)$ is a rational point of the elliptic curve $E(\mathbf{Q})$, then $(X, Y, Z)$ are also a point of the 
reduced elliptic curve $E(\mathbf{Q})~mod~p$, i.e.  solution of the equation: 
\begin{equation}\label{eq3.4}
Y^2Z=X(X-Z)(X-\lambda Z) ~mod~p. 
\end{equation}
\end{remark}
\begin{definition}
The elliptic curve $E({\Bbb F}_p)$ defined by equation (\ref{eq3.4}) 
 is called reduction of $E(\mathbf{Q})$ modulo $p$.
\end{definition}
\begin{remark}
The number of solutions of (\ref{eq3.4}) is always finite, i.e.  
\begin{equation}
|E({\Bbb F}_p)|<\infty.
\end{equation}
\end{remark}

\subsection{Zeta function of $E({\Bbb F}_p)$}
\begin{definition}
The zeta function of the elliptic curve  $E({\Bbb F}_p)$ is 
\begin{equation}
Z(u, E({\Bbb F}_p))=\exp \left(\sum_{n=1}^{\infty} {|E({\Bbb F}_{p^n})|\over n} ~u^n\right),
\end{equation}
where  ${\Bbb F}_{p^n}$ is  an extension of degree $n$ of the field  ${\Bbb F}_p$. 
\end{definition}
\begin{remark}
It is not difficult to prove (using the Lefschetz trace formula) that 
$Z(u, E({\Bbb F}_p))$ is always convergent and:
\begin{equation}
Z(u, E({\Bbb F}_p))={1\over 1-a_pu+pu^2},
\end{equation}
where  $a_p=p+1-|E({\Bbb F}_p)|$. 
\end{remark}

\subsection{$L$-function of  $E(\mathbf{Q})$}
\begin{definition}
 By an $L$-function of $E(\mathbf{Q})$ one understands the infinite product: 
\begin{equation}
L(s, E(\mathbf{Q}))=\prod_p  Z(p^{-s}, E({\Bbb F}_p))=
\prod_p {1\over 1-a_p p^{-s} +p^{1-2s}}.
\end{equation}
\end{definition}
\begin{remark}{\bf (Birch and  Swinnerton-Dyer)}
The $L$-function of $E(\mathbf{Q})$  encodes  “all” arithmetic information 
about the rational elliptic curve $E(\mathbf{Q})$. For example, the order of zero at point $s = 1$ is equal to the rank of $E(\mathbf{Q})$ 
(Birch and Swinnerton-Dyer conjecture). 
\end{remark}

\subsection{Eichler-Shimura Theorem}
We know that there is a holomorphic map: 
\begin{equation}\label{eq3.9}
X_0(N)\longrightarrow E(\mathbf{Q}).
\end{equation}
On the other hand, there is an $L$-function $L(f, s)$ attached to the cusp  modular form $f(z)\in S_2(\Gamma_0(N))$ on 
the Riemann surface $X_0(N)$..  According to  (\ref{eq3.9}), we can ask  how $L (f, s)$  is related to the $L$-function  $L(E(\mathbf{Q}), s)$ 
attached to the rational elliptic curve $E(\mathbf{Q})$? 
\begin{theorem}  {\bf (Eichler and Shimura)}
For $N> 1$ there exists a cusp modular form $f(z)\in S_2(\Gamma_0(N))$ and a rational elliptic curve  $E(\mathbf{Q})$, such that
\begin{equation}\label{eq3.10}
L(f,s)\equiv L(E(\mathbf{Q}), s).
\end{equation}
\end{theorem}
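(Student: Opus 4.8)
The plan is to establish the identity (\ref{eq3.10}) by reducing it to an equality of local Euler factors and then forcing that equality through the geometry of the modular curve in characteristic $p$, following the classical route of Eichler and Shimura. Since $L(E(\mathbf{Q}),s)=\prod_p (1-a_pp^{-s}+p^{1-2s})^{-1}$ is by construction an Euler product, the first task is to give $L(f,s)=\sum_{n\ge 1}c_n n^{-s}$ the same multiplicative shape; the whole theorem then comes down to matching, prime by prime, the coefficient $c_p$ of the modular form with the arithmetic quantity $a_p=p+1-|E(\mathbf{F}_p)|$.

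To obtain the Euler product on the modular side, I would first introduce the Hecke operators $T_p$ (for $p\nmid N$) acting on the finite-dimensional space $S_2(\Gamma_0(N))$. These operators commute and are self-adjoint for the Petersson inner product, so the space admits a basis of simultaneous eigenforms. Normalizing such an eigenform $f=\sum c_nq^n$ by $c_1=1$, the eigenvalue of $T_p$ is exactly $c_p$, the coefficients $c_n$ become multiplicative, and $L(f,s)$ factors with local term $(1-c_pp^{-s}+p^{1-2s})^{-1}$. This is already formally $Z(p^{-s},E(\mathbf{F}_p))$, so everything hinges on identifying the $c_p$ with the $a_p$ of a suitable curve.

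Next I would manufacture the curve itself. By Remark~\ref{rmk2.4} the eigenform $f$ corresponds to a holomorphic differential on $X_0(N)$, hence to a factor of the Jacobian $J_0(N)=\mathrm{Jac}(X_0(N))$ on which the Hecke algebra acts; the eigenform cuts out a quotient abelian variety $A_f$. When the eigenvalues $c_p$ lie in $\mathbf{Z}$, $A_f$ is one-dimensional, i.e.\ an elliptic curve $E=E(\mathbf{Q})$, and the holomorphic map of (\ref{eq3.9}) is realized as the composite $X_0(N)\to J_0(N)\to E$.

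The hard part, where the arithmetic is compelled to agree with the analysis, is the Eichler--Shimura congruence. The plan is to view $X_0(N)$ as a moduli space for elliptic curves equipped with a cyclic $N$-isogeny, to reduce it modulo a good prime $p$, and to analyze the Hecke correspondence $T_p$ on the special fiber. The crucial geometric input is that in characteristic $p$ the degree-$p$ isogenies degenerate, so that $T_p$ splits into the graph of the Frobenius $\mathrm{Frob}_p$ together with the graph of its dual (the Verschiebung):
\[ T_p \equiv \mathrm{Frob}_p + \mathrm{Frob}_p^{\vee} \pmod{p}. \]
Transporting this relation to the Tate module of $E$, the characteristic polynomial of $\mathrm{Frob}_p$ becomes $X^2-c_pX+p$; counting the fixed points of Frobenius, namely the $\mathbf{F}_p$-points of $E$, then yields $|E(\mathbf{F}_p)|=p+1-c_p$, that is $a_p=c_p$. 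Matching the finitely many factors at bad primes separately finishes the identification $L(f,s)\equiv L(E(\mathbf{Q}),s)$. I expect this congruence relation to be the main obstacle, since it rests on the moduli interpretation of $X_0(N)$ and on the structure of elliptic curves in characteristic $p$ (the inseparability of Frobenius and the factorization of multiplication-by-$p$ through Verschiebung), all of which must be set up with care before the clean endomorphism identity on $J_0(N)$ can be read off.
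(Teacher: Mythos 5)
The paper states this theorem without proof --- it is presented as a known result in the survey --- so there is no internal argument to compare yours against. Your outline is the standard Eichler--Shimura route and the main ideas are in the right place: diagonalizing the Hecke operators $T_p$ to obtain the Euler product $L(f,s)=\prod_p\left(1-c_pp^{-s}+p^{1-2s}\right)^{-1}$, using the differential attached to $f$ by Remark~\ref{rmk2.4} to cut a quotient $A_f$ out of $\mathrm{Jac}(X_0(N))$, and invoking the congruence $T_p\equiv \mathrm{Frob}_p+\mathrm{Frob}_p^{\vee}\pmod p$ on the reduction of the modular curve to force $c_p=a_p$. Two points deserve flagging. First, your construction yields an elliptic curve only when the eigenvalues $c_p$ are rational integers; you mention this in passing, but it is the existence statement itself that is at stake, so you should say explicitly that one must choose an eigenform whose Hecke eigenvalues lie in $\mathbf{Q}$, equivalently a one-dimensional isogeny factor of $J_0(N)$, and that such a factor need not exist for every $N$. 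Second --- a defect of the statement as printed rather than of your argument --- for $N$ with $g(X_0(N))=0$ (e.g.\ $N=2$, as the paper itself observes) the space $S_2(\Gamma_0(N))$ is zero, so no nonzero parabolic form exists and the claim as literally stated fails; the theorem is correctly read as holding for those $N$ for which $X_0(N)$ admits an elliptic quotient, the smallest being $N=11$. With those caveats your plan is the correct proof strategy, and the Eichler--Shimura congruence is, as you say, the genuinely hard step.
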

\begin{remark}
The cusp modular form $f$ in equation (\ref{eq3.10}) is called a {\it Hecke eigenform}; 
such a form is unique and invariant with respect to an algebra of Hecke operators. 
\end{remark}

\subsection{The automorphic and motivic $L$-functions}
\begin{definition}
The $L$-function $L (f, s)$ is called {\it automorphic}, because it comes from the cusp modular forms attached to the 
homogeneous space of the Lie group  $SL_2(\mathbf{R})$. 
\end{definition}
\begin{definition}
The $L$-function $L(E(\mathbf{Q}),s)$ is called {\it motivic}, because it comes from the 
Lefschets trace formula attached to a motivic cohomology of  the variety  $E(\mathbf{Q})$; see {\bf Alexandre Grothendieck}. 
\end{definition}

\subsection{Langlands Conjectures}
The formula (\ref{eq3.10})  can be generalized to the arbitrary arithmetic varieties. 
One of Robert ~P. ~Langlands Conjectures is a deep and spectacular generalization in this direction. 
 \begin{conjecture}\label{cnj3.15}
 {\bf (R.~P.~Langlands)}
 Each motivic $L$-function is equal to a product of the automorphic $L$-functions for certain Lie groups 
 (called reductive algebraic groups). 
 \end{conjecture}
\begin{exercise} 
Prove  \ref{cnj3.15} !!!
\end{exercise}

\bigskip\noindent
{\sf Acknowlegment.} 
I thank Prof.  Ibrahim Assem for an opportunity to participate  in  the Summer School 
at the University of  Sherbrooke.

\bibliographystyle{amsplain}

\end{document}